\newtheorem{theorem}{Theorem}
\newtheorem{lemma}[theorem]{Lemma}
\newtheorem{proposition}[theorem]{Proposition}
\newtheorem{corollary}[theorem]{Corollary}
\newtheorem{conjecture}{Conjecture}
\newtheorem{question}[conjecture]{Question}
\newtheorem*{claim*}{Claim}
\theoremstyle{definition}
\newtheorem*{definition*}{Definition}
\newtheorem{remark}[theorem]{Remark}
\newtheorem{example}[theorem]{Example}
\newcommand\ns[1]{ \left\{ {#1} \right\} }
\newcommand{\Z}{{\mathbb Z}}
\newcommand{\R}{{\mathbb R}}
\newcommand{\N}{{\mathbb N}}
\newcommand{\pM}{{\mathbb{M}}}
\newcommand{\eqd}{\stackrel{d}{=}}
\newcommand{\PP}{P}
\newcommand\garbage[1]{}
\renewcommand{\P}{{\mathbb P}}
\newcommand{\dff}[1]{\textbf{\emph{#1}}}
\newcommand{\ronn}{|}
\newcommand{\erk}{\hfill \ensuremath{\Diamond}} 
\newcommand{\bb}[1]{\mathbb{#1}}
\newcommand{\mc}[1]{\mathcal{#1}}
\newcommand{\ug}[1]{\mc S({#1})}
\newcommand{\usg}[1]{\mc S\text{*}({#1})}
\newcommand\sk[1]{  \mathfrak{#1}}
\begin{document}
	
	\title[Finitary isomoprhisms]{Finitary isomorphisms of some infinite entropy Bernoulli flows}
	\author{Terry Soo}

	\address{Department of Mathematics,
		University of Kansas,
	405 Snow Hall,
		\indent 1460 Jayhawk Blvd,
		Lawrence, Kansas 66045-7594}
		\email{tsoo@ku.edu}
	\urladdr{http://www.terrysoo.com}
	\thanks{Funded in part by a General Research Fund.}
	
	\keywords{Ornstein theory, Poisson process, continuous-time \indent Markov chain,  finitary isomorphism}
	\subjclass[2010]{37A35,  60G55, 60J27}
	
\begin{abstract}
A consequence of Ornstein theory is that the infinite entropy flows associated with Poisson processes and continuous-time irreducible Markov chains on a finite number of states are isomorphic as measure-preserving systems.   We give an elementary construction of such an  isomorphism that has an additional finitariness property, subject to the additional  conditions that the Markov chain has a uniform holding rate and a mixing skeleton.  
\end{abstract}
	
\maketitle

\section{Introduction}

A \dff{measure-preserving system}  is a probability space $(K, \kappa)$ endowed with a group of symmetries $T=(T_t)_{t \in G}$ such that $T_{t} \circ T_{s} = T_{t+s}$ and $\kappa \circ T_{s} = \kappa$ for all $t,s \in G$, where the group $G$ will usually be $\R$ or $\Z$.  In the case that $G = \R$, we also refer to the system as a \dff{flow}.    A system $(K', \kappa', T')$ is a \dff{factor} of  the system $(K, \kappa, T)$ if there exists a measurable mapping $\phi: K \to K'$ such that the push-forward of $\kappa$ under $\phi$ is $\kappa'$ and $\phi$ is \dff{equivariant} so that $\phi \circ T_t = T'_t \circ \phi$ for all $t \in G$; the map $\phi$ is also called a \dff{factor} map, and if $\phi$ is a bijection almost surely,  then its inverse map witnesses that $(K, \kappa, T)$ is a factor $(K', \kappa', T')$, and the two systems are \dff{isomorphic}.   We will be concerned with systems arising  from Poisson point processes and continuous-time  Markov chains.

Let $\Pi$ be a (homogeneous) Poisson point process on $\R$ with intensity $r >0$.  We will assume that $\Pi$ takes values on the space of 
Borel simple point measures $\pM$ 
with law $P_r(\cdot) = \P( \Pi \in \cdot)$.    We  endow  $\pM$ with $T=(T_t)_{t \in \R}$ the group of translations of $\R$   which act on $\pM$ via   $T_t(\mu) = \mu \cdot T_{-t}$  for $t \in \R$ and $\mu \in \pM$.   We refer to the measure-preserving system  $(\pM, P_r,   T)$ as a \dff{Poisson system} of intensity $r$.   
Let $X = (X_t)_{t \in \R}$ be a time-homogeneous  stationary continuous-time irreducible Markov chain with the  finite state space $A$.   The  Markov chain takes values on the space $A^{\R}$ which we also endow with the group of translations  by setting $(T_t x)_s = x_{s+t}$ for $x \in A^{\R}$ and $t,s \in \R$.   Let $L$ be the law of $X$.    We refer to the measure-preserving system $(A^{\R}, L, T)$ as a \dff{Markov system}.  
 A consequence of Ornstein theory \cite{MR3052869}  is the following theorem.

\begin{theorem}[Feldman and Smorodinsky \cite{MR0274718}]
\label{old}
Poisson systems are isomorphic to irreducible Markov systems with a finite number of states.
\end{theorem}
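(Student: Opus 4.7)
The plan is to invoke \emph{Ornstein's isomorphism theorem for flows}, which asserts that any two Bernoulli flows with the same Kolmogorov--Sinai entropy are measure-theoretically isomorphic. Reducing the problem to this theorem, it suffices to verify (a) that both $(\pM, P_r, T)$ and $(A^{\R}, L, T)$ are Bernoulli flows, and (b) that they share a common entropy rate, which we expect to be $+\infty$ in each case; isomorphism to a common ``infinite entropy Bernoulli flow'' model then gives the desired isomorphism by transitivity.

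First, I would show the Poisson flow is Bernoulli. By independence of the Poisson process over disjoint intervals, the restriction of $\Pi$ to $[n,n+1)$ for $n \in \Z$ forms an i.i.d.\ sequence of random point configurations under the time-one map $T_1$. Thus $T_1$ is isomorphic to a Bernoulli shift on the (Polish) infinite-alphabet space of simple point measures on $[0,1)$. By Ornstein's result that a flow whose time-one map is Bernoulli is itself a Bernoulli flow, $T$ is Bernoulli.

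Next, I would show the Markov flow is Bernoulli. The discrete-time skeleton $(X_n)_{n \in \Z}$ is a stationary mixing Markov chain on the finite set $A$, hence weakly Bernoulli and therefore Bernoulli by the Friedman--Ornstein theorem. However, the skeleton does not generate the full $\sigma$-algebra: one must also capture the jump times within each unit interval. I would augment the coding so that the time-one symbol records the pair consisting of the skeleton value $X_0$ together with the finite list of jump times and destinations in $[0,1)$. The memoryless property of the exponential holding times, combined with the strong mixing of the skeleton, should yield that this richer partition is weakly Bernoulli under $T_1$; Ornstein's suspension theorem then lifts this to Bernoullicity of the flow $T$.

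Finally, the entropy computation. The Poisson flow has infinite entropy per unit time because the positions of points within any interval of positive length are absolutely continuous random variables, contributing unbounded entropy even after coarse discretisation. The Markov flow has infinite entropy for the analogous reason: the exponentially distributed holding times are continuously distributed, so a refining sequence of partitions shows $h(T_1) = \infty$. The main obstacle is Step~2 --- carefully decoupling the skeleton from the continuous jump-time information so as to exhibit a weakly Bernoulli generator for $T_1$ on the Markov side. The Poisson case and the entropy computations are comparatively routine, and once Bernoullicity and infinite entropy are in hand, Ornstein's theorem supplies the isomorphism.
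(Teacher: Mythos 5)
Your proposal matches the route the paper takes (or more precisely, points to, since the paper simply cites Feldman--Smorodinsky and sketches the Ornstein-theory background in Section~2): establish that both the Poisson flow and the continuous-time Markov flow are Bernoulli flows of infinite entropy, then invoke Ornstein's isomorphism theorem that Bernoulli flows of equal entropy are isomorphic. The one caveat is that you leave the genuinely hard step --- verifying very-weak-Bernoulli (or weak Bernoulli) for a time-one generator of the Markov flow --- at the level of ``should yield,'' which is exactly the content of Feldman and Smorodinsky's paper; as a blind reconstruction of the outline this is fine, but it is not a self-contained proof.
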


Theorem \ref{old} is a cumulation of many results of Ornstein and his collaborators, which we will briefly discuss in Section \ref{bern}.
We will prove the following version of Theorem \ref{old}.

   Note that if $(A^{\R}, L, T)$ is a Markov system, then  $L=L_{M, \lambda}$ is determined by the transition probabilities $M \in [0,1]^{A \times A}$ and the holding rates $\lambda =  (\lambda_a)_{a \in A}$.   If $X$ is irreducible, then the matrix $M$ is irreducible, but may not be aperiodic and  $M(a,a) =0$ for all $a \in A$.   
   Recall that a Markov chain in a state $a$  changes states in an exponential amount of time with mean $(\lambda_a)^{-1}$ and independently  selects the next state $b$ with probability $M(a,b)$.  We say that $X$ has a \dff{uniform holding rate} $s>0$ if $\lambda_a = s$ for all $a \in A$.

    Recall that the \dff{jump times} of a continuous-time Markov chain are the random exponential times where it changes states.  	Given a continuous-time Markov chain $X \in A^{\R}$ with jump times $(J_i)_{i \in \Z}$, where $J_i <J_j$ if $i < j$  and $J_0 \leq 0$ is taken to the jump time  nearest to the origin from the left,  we define $\sk{X} \in A^{\Z}$ to be the  \dff{skeleton} of $X$, where $\sk{X}_i = X_{J_i}$ for all $i \in \Z$.   The  skeleton is mixing if and only if $M$ is irreducible and aperiodic.    

For $\mu \in \pM$, we set  $\mu \ronn_I(\cdot) := \mu(\cdot \cap I)$  to be the restriction of $\mu$ to an interval $I \subseteq \R$.    
We say that $\mu, \mu' \in \pM$ \dff{agree} on an interval $I$ if their restrictions to $I$ are the same.    Similarly, for $x,x' \in A^{\R}$, we set $x \ronn_I \in \R^{I}$ to be given $x\ronn_I(s) = x(s)$ for all $s \in I$ and we say that $x$ and $x'$ \dff{agree} on an interval $I$ if there restrictions to $I$ are the same.  Let $\phi$ be a factor from $(\pM, P_r, T) $ to $(A^{\R}, L, T)$.        We say a \dff{coding window} of $\phi$ is a function $w:\bb M\rightarrow \bb N\cup \ns{ \infty }$ 
	such that if $\mu$ and $\mu'$ agree on ${(-w(\mu) ,w(\mu) )}$, 
	then
	%
$\phi(\mu)$ and $\phi(\mu')$ agree on $(-1,1)$.

	We say that $\phi$ is \dff{finitary} if there exists a coding window $w$ such that $w$ is finite $P_{r}$-almost surely.    
	Thus if $\phi$ is a finitary factor,
	 then we can determine  $\phi(\mu)$ on an interval given  $\mu$ restricted to a  finite window.         We will use the analogous definitions for a factor maps from Markov systems to Poisson systems.

 \newpage
 
\begin{theorem}
\label{new}
Any Poisson system 
is finitarily isomorphic to 
any  irreducible Markov system with: a finite number of states,  a uniform holding rate,  and a mixing skeleton; furthermore,  there exists a finitary isomorphism with a  finitary inverse. 
\end{theorem}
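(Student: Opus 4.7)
The uniform holding rate assumption gives the Markov system a transparent product structure: its jump times form a rate-$s$ Poisson process and, conditional on them, the skeleton $\sk X$ is an independent stationary discrete-time Markov chain with transition matrix $M$, mixing by hypothesis. Consequently, $(A^{\R}, L, T)$ is finitarily isomorphic, with finitary inverse, to the marked Poisson system $(\pM_A, Q_s, T)$ in which a rate-$s$ Poisson process is decorated at each point by a state in $A$, the sequence of marks in increasing order of position being a stationary Markov chain with transitions $M$; this reduction is manifest because no look-ahead is needed to recover either side from the other. It therefore suffices to construct a finitary isomorphism with finitary inverse between the unmarked rate-$r$ Poisson system and this marked rate-$s$ one.

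I would split the remaining construction into two stages. Stage~1 changes the intensity from $r$ to $s$ while producing independent uniform $[0,1]$ marks on the new points. Because the inter-arrival gaps of a Poisson process are i.i.d.\ exponentials carrying continuous and hence infinite entropy, a standard conditional-quantile split represents an $\mathrm{Exp}(r)$ gap measurably and bi-measurably as an $\mathrm{Exp}(s)$ gap together with an independent uniform, and vice versa. Applied gap by gap, anchored at the point of the configuration nearest the origin, this produces an equivariant map between the rate-$r$ Poisson flow and the rate-$s$ Poisson flow carrying i.i.d.\ uniform marks, finitary in both directions. Stage~2 recodes those uniform marks into a stationary skeleton chain on $A$. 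Because the skeleton is mixing, this is essentially a finitary isomorphism between an i.i.d.\ process and a mixing finite-state Markov chain on the marks, in the spirit of Keane--Smorodinsky; since the uniform marks carry entropy to spare, one can arrange that the recoding sets aside an independent uniform ``leftover'' on each point of the image, keeping the transformation bijective and hence a candidate for finitary invertibility.

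The main obstacle will be controlling the coding window in \emph{both} directions simultaneously. At each stage information is being locally redistributed rather than copied, so the inverse must re-synchronize to the source from a bounded random window. I would follow the usual marker-and-filler philosophy: arrange for the construction to plant sparse but easily recognizable patterns---for example unusually long gaps, or specific short blocks of Markov marks---that the decoder can locate in a bounded random neighborhood of the origin and use as anchors from which to reconstruct the source deterministically. The delicate point is to design these markers for Stage~1 and Stage~2 so that their composition still admits $P_r$-a.s.\ and $L$-a.s.\ finite coding windows, and in particular so that the markers of Stage~2 are not so rare as to force arbitrarily long reads when inverting through Stage~1; one must also verify throughout that the resulting factor maps descend equivariantly to the continuous-time flow and not merely to the underlying Palm point process.
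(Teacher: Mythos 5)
Your high-level reduction matches the paper's: view the chain as a marked Poisson process (jump times Poisson, skeleton independent and Markov, thanks to the uniform holding rate), and then build a finitary isomorphism from the unmarked Poisson to this marked one. The paper also begins with exactly this decomposition. However, there is a genuine gap in your Stage~1, and you have flagged it yourself without resolving it.

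The gap-by-gap exponential splitting, anchored at the point nearest the origin, is \emph{not} translation-equivariant for the flow. If $p_0\le 0<p_1$ are the two points straddling the origin and the $i$-th gap $\ell_i$ is replaced by $g(\ell_i)$, then after translating by some $t\in(0,p_1]$ the anchor jumps from $p_0-t$ to $p_1-t$, and the two ways of applying the construction disagree: one places a point at $p_0+g(\ell_0)-t$, the other at $p_0+\ell_0-t$. The map is well-defined on the Palm process under point shifts, but it does not descend to a factor of $(\pM,P_r,T)$ under $\R$-translations. This is precisely the obstruction the paper's machinery is built to overcome. The paper does not do a gap transformation at all: it invokes the Soo--Wilkens theorem (Theorem~\ref{SW}) as a black box to change intensity so that one may assume $r=s$, and then proves Proposition~\ref{inter}, which uses the Holroyd--Lyons--Soo selection rule (Proposition~\ref{key}) to obtain a genuinely translation-equivariant, finitarily detectable partition of $\R$ into ``globes'' and inter-globe intervals. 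Randomness for the marks is then extracted only from the uniquely-occupied (special) globes via a Borel isomorphism, and a simple resampling of the one special point per globe keeps the map bijective. The globes replace the anchor-at-nearest-point device and make the whole construction equivariant and locally decodable; this is the idea you are missing.

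Your Stage~2 is also vaguer than what is needed. You propose to recode continuous i.i.d.\ uniform marks into a Markov chain and keep an independent uniform leftover per point so the step is bijective, but Keane--Smorodinsky produces an isomorphism only between equal-entropy finite-alphabet shifts; entropy to spare gives a factor, not automatically an invertible finitary map with a clean leftover structure. The paper sidesteps this by choosing in Proposition~\ref{inter} a \emph{finite} alphabet $A$ with distribution $\alpha$ whose entropy exactly equals that of the skeleton chain, and then applies Keane--Smorodinsky directly to $(A^{\Z},\alpha^{\Z},\sigma)\to(B^{\Z},L',\sigma)$ with no surplus to manage. Finally, the finitary properties of the composed map and its inverse then come for free from the finitariness of the selection rule (Proposition~\ref{key}\eqref{finitary-globes}), of Theorem~\ref{SW}, and of Keane--Smorodinsky, so no fresh marker-and-filler design is needed at this stage.
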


We will prove  Theorem \ref{new} by exploiting some recent results of Soo and Wilkens \cite{Sw2}; in particular they proved that any two Poisson systems are finitarily isomorphic.  Here, although the factor map $\phi$ is a self-map of $\pM$, the definition of finitary is exactly as stated above.

\begin{theorem}[Soo and Wilkens \cite{Sw2}]
	\label{SW}
Any two Poisson systems are finitarily isomorphic; furthermore,   there exists a finitary isomorphism with a  finitary inverse.
\end{theorem}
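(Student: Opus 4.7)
The plan is to establish the finitary isomorphism between two Poisson systems of intensities $r_1$ and $r_2$ by reducing the problem, via a marker-filler construction adapted to the continuous-time setting, to a known finitary isomorphism of Bernoulli shifts.

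First, I would fix $r_1, r_2 > 0$ and locate inside $\Pi \sim P_{r_1}$ a factor-measurable set of \emph{marker points} defined purely in terms of local inter-point patterns (for instance, points at which a prescribed finite configuration of gap comparisons is realized). Because the pattern is local and the Poisson law is translation invariant, the marker set is itself a factor point process whose inter-marker gaps form a stationary renewal-type sequence with finite, exponentially tailed length. Thus the markers partition $\R$ into a sequence of random blocks of finite length. Conditionally on the marker locations, the non-marker points of $\Pi$ restricted to each block are independent across blocks, and each block contributes a finite amount of data that can be encoded as a finite string drawn from a distribution that depends only on the block length.

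Next, I would apply a Keane-Smorodinsky style finitary isomorphism between the Bernoulli-type sources obtained by encoding each block under intensity $r_1$ and the corresponding source obtained by re-encoding the analogously partitioned $r_2$-Poisson process. The output of this transcoding on a given block uses only finitely many source blocks on each side, and I would then reconstruct a $P_{r_2}$-distributed configuration on each block by inverting the encoding. The spatial independence of the Poisson process together with the equivariant construction of the markers is what makes the stitched configuration exactly $P_{r_2}$. A symmetric construction on the $r_2$ side produces the inverse map, and the composition in both orders is the identity almost surely because the Bernoulli isomorphism used internally is invertible.

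Finitariness in both directions reduces to controlling two windows: the distance from the origin to the nearest enclosing pair of markers, and the radius of dependence used by the internal Bernoulli isomorphism. Both have finite expectation by the local definition of the markers and by standard estimates for Keane-Smorodinsky constructions, so both coding windows are finite almost surely. The main obstacle is tuning the marker rule and the block-level encoding so that the continuous-time finitariness condition — recovering $\phi(\mu)$ on the interval $(-1,1)$ from $\mu$ restricted to a finite interval about $0$ — holds on both sides simultaneously; in particular, the marker density must be tight enough that the two enclosing markers near the origin, on \emph{both} the source and target configurations, lie within a finite window almost surely, and the encoding must be chosen so that the output on $(-1,1)$ is determined by finitely many output blocks rather than a genuinely doubly infinite tail.
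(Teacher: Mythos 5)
This theorem is not proved in the paper; it is quoted from Soo and Wilkens \cite{Sw2}, and the tools the present paper imports (the selection rule of Proposition \ref{key}, the Borel isomorphisms of Lemma \ref{borel}) are precisely the ingredients of that external proof. Measured against what such a proof actually requires, your sketch has two genuine gaps. First, a block of a Poisson configuration between two markers cannot be ``encoded as a finite string'': the configuration in an interval is a continuum-valued object and the Poisson flow has \emph{infinite} entropy, so there is no lossless finite-alphabet encoding and the Keane--Smorodinsky theorem (a finite-alphabet, equal-finite-entropy statement) does not apply to the transcoding step as you describe it. The workable substitute is to extract the conditionally uniform random variables carried by the point positions inside each block and manipulate them with measure-preserving Borel bijections of $[0,1]$ (as in Example \ref{trans-ex} and Lemma \ref{borel}); that replacement is where the real work sits, and your sketch passes over it.

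Second, and more fundamentally, invertibility plus finitariness of the inverse require that the marker/block structure be recoverable from the \emph{output} configuration, even though the output is a Poisson process of a different intensity whose points are not those of the input. A marker rule defined by ``a prescribed finite configuration of gap comparisons'' in the source process will generally be destroyed by the map, so the inverse cannot relocate the blocks. The Holroyd--Lyons--Soo selection rule is engineered exactly to avoid this: by Proposition \ref{key}, the globes are determined by the configuration \emph{outside} the globes, so resampling inside them leaves the partition invariant ($\ug{\Pi}=\ug{\Lambda}$ in part (e)), and this resampling-invariance is what makes the map a bijection with a finitary inverse. Relatedly, your claim that block contents are conditionally independent across blocks given the marker locations is not automatic for pattern-defined markers (conditioning on where the pattern does \emph{not} occur couples adjacent blocks); again this independence is a designed property of the selection rule, not a free consequence of locality. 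Without these two points your construction neither produces an exactly Poisson output nor admits an inverse, finitary or otherwise.
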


\section{Bernoulli systems}
\label{bern}

\subsection{A very brief summary of Ornstein theory}
 We discuss some of 
 the
 developments of Ornstein theory  leading up to Theorem \ref{old}.   We recommend the articles \cite{MR2342699, MR3052869,  MR0447525,  MR1023980,  MR0304616}  for more information. 
 Ornstein theory tells us that  in disguise, up to a change of coordinates, many random and seemingly deterministic and mechanical  
 systems of interest have the same behavior as sequences of 
 dice rolls.
 
  Recall that a \dff{Bernoulli shift} is a discrete time system $(A^{\Z}, \alpha^{\Z}, \sigma)$, where $\kappa$ is a probability measure on the finite or countable set $A$, and $\sigma$ is the \dff{left-shift} given by $(\sigma x)_i = x_{i+1}$.  The Kolmogorov-Sinai measure-theoretic entropy for a Bernoulli shift  is given by the Shannon entropy: $$-\sum_{a \in A}  \alpha_a \log (\alpha_a)$$ and is an isomorphism-invariant \cite{MR0103255, MR0103256}.    Ornstein \cite{Ornstein, Ornstein-infinite} proved  it is a complete isomorphism-invariant  for Bernoulli shifts; furthermore, Ornstein theory provides a systematic  method to check whether a system is \dff{Bernoulli}, that is  {\em isomorphic} to a Bernoulli shift.   Ornstein theory tells us that to check whether a system is Bernoulli, one only needs to verify that it satisfies 
  one of 
  various mixing-type conditions; usually the condition {\em very weak Bernoulli} is the easiest one to verify.    As a result of Ornstein theory a host of systems are known to be Bernoulli, including mixing finite state Markov chains  \cite{MR0274718}, ergodic toral automorphisms \cite{MR0294602}, and any factor of a 
  Bernoulli system \cite{MR0274717}.    
  
   We say that a flow $(K, \kappa, T)$ is \dff{Bernoulli} if for each time $t >0$, the discrete time system $(K, \kappa, (T_{nt})_{n \in \Z})$ is Bernoulli.  The entropy of a flow is given by the entropy of the time-one map $T_1$.  The analogous theory has also been developed for flows.   Bernoulli flows of the same entropy are isomorphic \cite{Orn-flow}.    Finite entropy Bernoulli flows include:  ergodic Totoki flow \cite{Orn-tot}, geodesic flow  on compact surface of negative curvature \cite{Orn-geo},  and Sinai billiards \cite{Orn-billard}.  In the infinite entropy case that concerns this article,  Bernoulli flows include: 	Poisson systems, irreducible Markov systems  (Feldman and Smorodinsky \cite{MR0280678}), and more generally mixing Markov shifts of kernel type, which include  Brownian motion on 
   bounded reflecting regions (Ornstein and Shields \cite{MR0322137}).

 \subsection{Finitary isomorphism of mixing Markov chains}
 
 The isomorphisms given by  Ornstein's theory are provided by an existence theorem (think Baire category \cite{BurKeaSer})    and will not in general have a finitary property.       Keane and Smorodinsky \cite{keanea, keaneb, keanec} proved using what is now sometimes referred to as a {\em marker-filler} method that mixing finite state Markov chains of the same entropy are isomorphic by explicitly constructing a  factor map that is finitary and has a 
 finitary 
 inverse; here  the factor $\phi : A^{\Z} \to A^{\Z}$ is \dff{finitary} if  $ \phi(x)_0 = \phi(x')_0$ provided that $x$ and $x'$ agree on the coordinates $(-w(x), w(x))$, where the coding window $w: A^{\Z} \to \N$ is finite almost-surely.   We will make use Keane and Smorodinsky's result in our proof of Theorem \ref{new}.  
 
  In an elaborate extension of  the marker-filler method, Rudolph \cite{rud-fin} has tried to develop a theory that parallels Ornstein's, which  outputs finitary isomorphisms instead of general measurable isomorphisms.    He has applied this theory to show that countable state finite entropy mixing  Markov chains that are {\em exponentially recurrent}   are finitarily Bernoulli \cite{MR684246}.   However, despite Rudolph's machinery,  the list of systems that  are finitarily Bernoulli remains much smaller than the list of Bernoulli systems.    It is our hope to expand the list of finitarily Bernoulli systems.

 \subsection{Isomorphisms of Poisson point processes on $\R^d$}
 
 Ornstein and Weiss \cite{MR910005} also proved a very general isomorphism theorem for Poisson point processes, which implies that any two Poisson point processes on $\R^d$ are isomorphic via an isometery-equivariant factor; the finitary version of this result was recently proved by Soo and Wilkens \cite{Sw2}.  Previously,  Kalikow and Weiss \cite{MR1143427} proved that with respect to the group of 
 integer 
 translations, Poisson point processes on $\R$ are  finitarily isomorphic and finitarily Bernoulli.

 \section{The Proof of Theorem \ref{new}}
      
 \subsection{An outline of the proof}
 
By Theorem \ref{SW}, without loss of generality we may assume that the  Poisson point process, we are to transform,   has the same intensity as the uniform holding rate of the desired Markov chain.    Thus Theorem \ref{SW} already gives us all the jump times  of the desired Markov chain; we are only missing  values for the (new) states at the jump times.       By employing  the isomorphism of Keane and Smorodinsky \cite{keanec}, it suffices to consider independent and identically distributed (i.i.d.)\ states.  Hence we need to prove the following finitary isomorphism concerning marked Poisson point processes; more specifically,  Poisson point processes where the points are assigned i.i.d.\ labels.

If $\mu \in \pM$,  we write $[\mu] = \ns{ p_i}_{i \in \Z} \subset \R$ to be the points of $\mu$, where our convention will be that  the points are ordered so that $p_i < p_j$ if $i <j$, and we take $p_0 \leq 0$ to be the point closest to the origin from the left.   Let $A$ be a set; we will mainly be concerned with the case that $A$ is finite or countable.   We will refer to elements of $x \in A^{\Z}$ as \dff{marks}.   Let $(\mu, x) \in \pM \times A^{\Z}$; we will think of the points of $\mu$ as marked by the corresponding term in the sequence $x$ so that if $[\mu] = \ns{p_i}_{i \in \Z}$, then
$$[(\mu, x)] = \ns{  (p_i, x_i) }_{i \in \Z}$$
is the sequence of \dff{marked points}.

   We will define translations of $\pM \times A$ which preserve the markings.    Let  $\theta$ be a translation of $\R$.    We set $\theta(\mu, x) = (\theta \mu, \sigma^n x)$, where $\sigma$ is the usual left-shift and $n \in \Z$ is such that $\theta (p_n) \leq 0$ is the point of $\theta \mu$ that is closest to the origin from the left.    Thus if the point $p_0$ receives the mark $x_0$, it will receive the same mark even after it is translated.     
   A mapping $\psi:  \pM \to \pM \times A^{\Z}$ is \dff{translation-equivariant} if $\psi \circ T_t = T_t \circ \psi$ for all $t \in \R$.

	\begin{proposition}[Marked Poisson point processes]
		\label{inter}
		Let $(A, \alpha)$ be a standard probability space.    There exists
		 $\psi:\pM \to \pM \times A^{\Z}$ with the following properties.   Write $\psi(\mu) = (\psi(\mu)_1, \psi(\mu)_2)$.      Let $ r>0$.  
		\begin{enumerate}[(a)]
			\item
			\label{prod}
			If $\Pi$ is a Poisson point process on $\R$ of intensity $r$, then $\psi(\Pi)_1$ is Poisson point process on  $\R$ of intensity $r$ and $\psi(\Pi)_2$ has law  $\alpha^{\Z}$. 
			\item
			\label{independence}
			The Poisson point process $\psi(\Pi)_1$ and the sequence of random variables $\psi(\Pi)_2$ are independent. 
	      \item
			\label{inj-a}
			On a set of $\PP_r$-full measure,  the map $\psi$ is injective.
			\item
The map $\psi$ is 		translation-equivariant.   
			\item
			\label{fin-ex}
			The map $\psi$ is finitary so that there is a $P_r$-almost surely finite $w: \pM \to \N$ such that if 
			$\mu$ and $\mu'$ agree on $(-w(\mu) ,w(\mu))$,
			then 
			$\psi(\mu)_1$ and $\psi(\mu')_1$ agree on $(-1,1)$, 
			where
			and all the markings of the points of $\psi(\mu)_1$ and $\psi(\mu')_1$ are the same for their points in $(-1,1)$.  
			\item
			The inverse map $\psi^{-1}$ is finitary so that there exists a $P_r \times \alpha^{\Z}$-almost surely finite $w: \pM \times A^{\Z} \to \N$ such that if 
			$\mu$ and $\mu'$ agree on $(-w(\mu,x) ,w(\mu,x))$ and $x$ and $x'$ also  agree on  $(-w(\mu,x) ,w(\mu,x))$, then 
%
$\psi^{-1} (\mu, x)$ and $\psi^{-1} (\mu', x')$ agree on $(-1,1)$.  
		\end{enumerate}
	\end{proposition}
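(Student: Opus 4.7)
My plan is to derive Proposition \ref{inter} from the finitary isomorphism machinery of \cite{Sw2} by re-interpreting a marked Poisson process as a Poisson point process on a richer space.  The intuition is that a Poisson flow has infinite entropy, so there is enough ``room'' in its law to accommodate an independent i.i.d.\ mark sequence, and the only nontrivial question is whether the encoding and decoding can be made finitary.  The key observation is that a Poisson point process of intensity $r$ on $\R$ together with independent $\alpha$-distributed marks at each point is the same object as a Poisson point process on $\R \times A$ with intensity measure $r \cdot \mathrm{Leb}_{\R} \otimes \alpha$: the mark of a point is simply its second coordinate.  Independence of the mark sequence from the $\R$-projection, which is parts (a) and (b), is then a standard property of product Poissons, and the flow $T$ acts on the first coordinate only.

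The central step will be to produce a finitary, $\R$-translation-equivariant isomorphism $\Phi : (\pM, P_r, T) \to (\pM_{\R \times A}, Q_r, T)$ with a finitary inverse, where $Q_r$ denotes the law of the Poisson process on $\R \times A$ with intensity $r \cdot \mathrm{Leb}_{\R} \otimes \alpha$.  Theorem \ref{SW} as stated here covers only the $\R$-to-$\R$ case, but the methods of \cite{Sw2} produce $\Phi$ in this more general setting.  Given $\Phi$, I will define $\psi(\mu) := \bigl( \pi_1 \Phi(\mu), \, y \bigr)$, where $\pi_1$ projects onto $\R$ and $y \in A^{\Z}$ lists the second coordinates of the points of $\Phi(\mu)$, indexed so that $y_i$ is the mark attached to the $i$-th point of $\pi_1 \Phi(\mu)$, using the convention that the $0$-th point is the one closest to the origin from the left.

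With $\psi$ thus defined, the verification of (a)--(f) is mostly direct.  Parts (a) and (b) follow from the marginal and product structure of a Poisson on $\R \times A$.  Part (c) follows from $\Phi$ being an almost-sure bijection: from $\psi(\mu)$ one recovers $\Phi(\mu)$ as a marked configuration on $\R \times A$, and then $\mu = \Phi^{-1}(\Phi(\mu))$.  Part (d) follows from the $\R$-equivariance of $\Phi$ together with the equivariant labelling convention.  Parts (e) and (f) follow from the finitariness of $\Phi$ and $\Phi^{-1}$: determining $\psi(\mu)_1$ and the attached marks on $(-1, 1)$ reduces to knowing $\Phi(\mu) \cap ( (-1, 1) \times A )$, which depends only on $\mu$ restricted to a finite interval via the coding window of $\Phi$, and analogously in the reverse direction.

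The hard part will be the construction of $\Phi$, i.e., a finitary isomorphism between a Poisson process on $\R$ and a Poisson process on $\R \times A$.  One route is to invoke a more general form of the construction in \cite{Sw2} than what appears as Theorem \ref{SW} here.  A more self-contained alternative would be to first apply Theorem \ref{SW} to boost $\Pi$ to an arbitrarily large intensity $R \gg r$, and then to run a Keane--Smorodinsky-style marker-filler scheme on the resulting high-intensity Poisson process: one would simultaneously assign $A$-valued labels to points and slightly reposition them, so that the final configuration is a Poisson of intensity $r$ carrying independent i.i.d.\ $\alpha$-marks.  The technical challenge is making the marker-filler scheme translation equivariant and ensuring that all coding windows remain almost surely finite in both directions; this is the heart of the argument.
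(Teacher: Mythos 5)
Your reduction of the statement to finding a finitary, translation-equivariant isomorphism $\Phi$ between a Poisson process on $\R$ and one on $\R \times A$ with intensity $r \cdot \mathrm{Leb} \otimes \alpha$ is a correct and clean reformulation: parts (a)--(d) really would follow formally from such a $\Phi$, as you say. However, you never actually construct $\Phi$, and this is not a peripheral detail but the entire content of the proposition. Theorem \ref{SW} as cited covers Poisson processes on $\R^d$ against $\R^d$ with differing intensities; a finitary isomorphism from a Poisson on $\R$ to a Poisson on the ``thickened'' line $\R \times A$ is precisely the finitary mark-generation problem being posed, so invoking ``the methods of \cite{Sw2}'' here is circular rather than a reduction. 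Your fallback --- boost the intensity via Theorem \ref{SW} and then run a Keane--Smorodinsky marker-filler scheme to peel off marks while repositioning points --- is a plausible outline, but you explicitly defer ``the heart of the argument'': making the scheme fully translation-equivariant (not merely $\Z$-equivariant) and keeping both coding windows almost surely finite. That is where the real work lies.

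For contrast, the paper's proof does not invoke a Poisson-to-Poisson isomorphism at all at this stage. It instead uses the selection rule $\mc S$ of Holroyd--Lyons--Soo (Proposition \ref{key}) to produce, as a finitary, fully equivariant function of $\Pi$, a collection of well-separated ``globes'' of length two. Within each special globe (those containing exactly one point), the single point is conditionally uniform and independent of the rest of the process (Corollary \ref{PointUSE}); the Borel isomorphism of Lemma \ref{borel} then bijectively splits this one uniform random variable into a fresh uniform position plus exactly as many i.i.d.\ $\alpha$-marks as are needed to label the special point and all non-special points assigned to that globe. Finitariness is inherited directly from property \eqref{finitary-globes} of the selection rule. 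So the missing ingredient in your argument --- a finitary source of extra i.i.d.\ randomness that is equivariant and decodable --- is exactly what the globe machinery supplies, and without some substitute for it your proposal does not close.
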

	
	%
	%
	The proof of Proposition \ref{inter} will involve variations of some ideas from Soo and Wilkens \cite{Sw2},  Holroyd, Lyons, and Soo \cite{splitting}, and Ball \cite{MR2133893}.     
	
Using  Proposition \ref{inter}, it is not difficult to prove Theorem \ref{new}.   
If two random variables $W$ and $W'$ have the same law, then we write $W \eqd W'$.

	\begin{proof}[Proof of Theorem \ref{new}]
		Let $\Pi$ be a Poisson point process of intensity $s>0$.
		Let $X$ be a stationary continuous-time  irreducible  Markov chain with a  finite state space $B$ with a uniform holding rate $r>0$ and a mixing skeleton.    We will define a finitary isomorphism so that $\phi(\Pi) \eqd X$.    Without loss of generality, by  Theorem \ref{SW}, we may assume that $\Pi$ has intensity $s=r$.

		  Suppose that $X$ has transition probabilities given by $M \in [0,1] ^{B \times B}$, and let $\beta \in [0,1]^B$ be the stationary distribution for $M$, so that $\beta M = \beta$.  Let $\sk{X}$ be the skeleton of $X$, so that $\sk{X}$ is a discrete-time mixing Markov chain with  transition matrix $M$ and stationary distribution $\beta$.   Let $L'$ be the law of $\sk{X}$.    The entropy of the system $(B^{\Z}, L', \sigma)$ is given by 
		$$0< h= - \sum_{i,j \in B}  \beta_i M_{ij} \log (M_{ij}) < \infty.$$  Let $\alpha$ be a probability measure on a finite set $A$ such that 
		$$ -\sum_{a \in A}  \alpha_a \log (\alpha_a)= h.$$    
	Since we assume that the skeleton $\sk{X}$ is mixing,	by 	Keane and Smorodinsky \cite{keanec}, there exists a finitary isomorphism $\zeta:  A^{\Z} \to B^{\Z}$ of the systems $(A^{\Z}, \beta^{\Z}, \sigma)$ and $(B^{\Z}, L', \sigma)$.

				With the parameters: $A$ and $\alpha$, let  $\psi$ be the finitary isomorphism from Proposition \ref{inter}.  Consider the mapping 
				$$ \xi(\mu) = \big(\psi(\mu)_1, \zeta \circ \psi(\mu)_2 \big).$$   It is easy to verify that $\xi$ is translation-equivariant.   The finitary properties of $\xi$ are inherited from $\psi$ and $\zeta$ in the following way.
				From property \eqref{fin-ex}, we already know that $\xi$ is finitary in the first coordinate.  Write  $[\psi(\mu)_2] = \ns{ w_i}_{i\in \Z}$,     $[\psi(\mu)_1] = \ns{q_i}_{i\in \Z}$, and $[\zeta \circ \psi(\mu)_2] = \ns{ x_i}_{i \in \Z}$.     Suppose that $q_n, q_{n+1}, \ldots, q_m$ are the points of $\psi(\mu)_1$ in $[-1,1]$, with markings $x_n, \ldots, x_{m} \in B$.        Since the mapping $\zeta$ given to us by Keane and Smorodinsky is finitary and translation-equivariant, in order to determine
				 $x_n, \ldots, x_m$, we need only the values  $w_{-N}, \ldots, w_N \in A$ for some large $N = N(w)$.  Similarly, 
				  $\psi$  is
				  finitary and translation-equivariant, so  we also only need $\mu$ restricted to some large window to determine $w_{-N}, \ldots, w_N$.  Since Keane and Smorodinsky's map also has a finitary inverse, similar considerations give that $\xi$ has a finitary inverse.

				 By properties \eqref{prod} and \eqref{independence} of $\psi$  and the definition of $\xi$, we have    $\psi(\Pi)_1 \eqd \Pi$ and $\zeta \circ \psi(\Pi)_2 \eqd \sk{X}$ and that these two coordinates of $\xi(\Pi)$ are independent.   Another  important consequence of assuming that the continuous-time Markov chain has a uniform holding rate is that  the jumps times $(J_i)_{i \in \Z}$ of the continuous-time Markov chain $X$ are independent of its skeleton $\sk{X}$.  Thus
				 
				 \begin{equation}
				 	\label{jump-eq}
				  \xi(\Pi) \eqd \Big(\sum_{i \in \Z} \delta[J_i], \sk{X} \Big).
				  \end{equation}

 			It remains to piece together the required Markov chain from the jumps times and skeleton.  Recall that $[\psi(\mu)_1] = \ns{q_i}_{i \in \Z}$ and $[\zeta \circ \psi(\mu)_2] = \ns{ x_i}_{i \in \Z}$.     For $t \in \R$, let $q(t) = i$, where $i$ is the largest integer for which $q_i \leq t$.        Set 
				$$\phi(\mu)_t =  x_{q(t)}.$$
				By \eqref{jump-eq}, we have $\phi(\Pi) \eqd X$ and $\phi$ inherits all the the other required properties from $\xi$.
		\end{proof}

\begin{remark}
	Recall  Rudolph \cite{MR684246} proved that countable state finite entropy mixing Markov chains that are exponentially recurrent   are finitarily Bernoulli.  Thus the proof of Theorem \ref{new} extends immediately to the case where the skeleton of the continuous-time Markov chain satisfies the conditions of Rudolph's theorem. \erk
	\end{remark}

 \subsection{The Proof of Proposition \ref{inter}}
   
   We begin with 
   a 
   simple toy example that will motivate some of the technical tools we need to introduce.    We say that a random variable $U$ is \dff{uniformly distributed} on an interval $[a,b]$, if $\P(U \leq x) = \frac{x-a}{b-a}$ for all $x \in [a,b]$.  Let $\delta_s = \delta[s] \in \pM$ denote the Dirac point measure at $s\in \R$.  
\begin{example}[Generating a marked Poisson point process] 
	\label{trans-ex}   Suppose we are given $\Pi$, a Poisson point process on $\R$ of intensity $r$ and we want to generate in a bijective way a Poisson point process on $\R$ of intensity $r$, where every point of the process is assigned, in an independent way, a fair coin-flip.    Furthermore, we want to do this in a way that is translation-equivariant with respect to integer translations.   We proceed in the following way.

	Consider the partition of $\R$ given by $\ns{ [i, i+1) }_{i \in \Z}$. Call an interval \dff{special} if it contains exactly one point of $\Pi$; we will also refer to the point in a special interval as \dff{special}.    Let $(i_k)_{k \in \Z}$ be the right endpoints of the special intervals, where $i_0 \leq 0$ is chosen to the largest such endpoint.    We assign each point of $\Pi$ that is not in a special interval to the nearest special interval to the right of the point.
	   Let $(U_{i_k})_{k \in \Z}$ be the special points.   Suppose each special interval $[i_k -1, i_k)$ is assigned $n_k \geq 0$ non-special points.       By elementary properties of Poisson point processes, the random variables $(i_k - U_{i_k})_{k \in \Z}$ are i.i.d.\ and uniformly distributed on the unit interval $[0,1)$; furthermore, these random variables are independent of $\Pi$ with the special points removed:
	$$\Pi - \sum_{k \in \Z} \delta_{U_{i_k}}.$$   
By using binary expansions (see also Lemma \ref{borel}), it is easy to see that for each $k \in \Z^{+}$ there exists a  bijection   $f_k: [0,1) \to [0,1) \times \ns{0,1}^{n_k+1}$ such that if $U$ is uniformly distributed on the unit interval, then $f_k(U)_1$ is uniformly distributed on the unit interval and independent of $f_k(U)_2$, a sequence of $n_k+1$ independent fair coin-flips.    Hence 
$$\Pi':= \sum_{k \in \Z} \delta[{  i_k - f_k(i_k - U_{i_k})_1}] +    \Pi - \sum_{k \in \Z} \delta[U_{i_k}] \eqd \Pi$$
 is independent of all the sequences of independent coin-flips given by $ (f_k(i_k - U_{i_k})_2)_{k \in \Z}$.    Thus in each special interval we have, in a bijective way,  resampled the special point and generated enough independent fair coin-flips to mark every point assigned to it, including the resident special point.   \erk
	\end{example}   
\subsubsection{Some special tools}
In Example \ref{trans-ex}, we  make use of an obvious partition of $\R$ into interval of unit length, since we only require that the mapping be translation-equivariant with respect to unit translations.     We will make use the following construction of Holroyd, Lyons, and Soo \cite{splitting} which will give us as function of the Poisson point process a  fully translation-equivariant finitary partition of $\R$ with enough independence properties that will allow us to imitate the construction in Example \ref{trans-ex}.
  Let $\mc F$ be the set of closed subsets of $\R$.   
 
 	\begin{proposition}[Holroyd, Lyons, and Soo \cite{splitting}]
 		\label{key}  There exists a  mapping $\mc{S}: \pM \to \mc F $ such that for $\mu \in \pM$, we have that  $\mc{S}({\mu})$ is a disjoint union of closed intervals of length two,  which we  refer  as \dff{globes} under $\mu$; the globes have  the following properties.
 	 \begin{enumerate}[(a)]
 	\item
The mapping $\mc S : \pM \to \mc F$ is translation-equivariant  so that $\ug{ T_t\mu} = T_t \ug{\mu}$ for all $\mu \in \pM$ and all  $t \in \R$ .
 \item
 \label{finitary-globes}
For any $\mu, \mu' \in \pM$, if they agree on $ (\mc{S}( \mu))^c$, then $\mc{S}(\mu) = \mc{S}(\mu')$.   Furthermore,  for any $z \in \R$ any $\mu, \mu' \in \pM$, if $[z-1, z+1]$ is a globe under $\mu$, then whenever $\mu$ and $\mu'$ agree on $[z-126, z+126]$, then $[z-1, z+1]$ is also a globe under $\mu'$.
 \item
\label{distance}
 The distance between the endpoints of two distinct globes is greater than $50$.
 \item
If $\Pi$ is a Poisson point process on $\R$, then almost surely $\ug{\Pi}$ is an infinite union of globes.
\item
If $\Pi$ and $\Gamma$ are independent Poisson point processes on $\R$ of the same intensity, then the point process given by  $$\Lambda:=\Gamma|_{\ug{\Pi}} +\Pi|_{\ug{\Pi}^c}$$ has the same law as $\Pi$ and $\ug{\Pi}=\ug{\Lambda}$.
  \end{enumerate}
 	\end{proposition}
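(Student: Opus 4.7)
The plan is to mimic Example \ref{trans-ex}, with the naive partition of $\R$ into unit intervals replaced by the translation-equivariant \emph{globes} $\mc S(\Pi)$ supplied by Proposition \ref{key}. The globes play two complementary roles: by property (e) of Proposition \ref{key}, conditional on $\Pi|_{\mc S(\Pi)^c}$ (and hence, by property (b) of Proposition \ref{key}, on $\mc S(\Pi)$ itself), the restriction of $\Pi$ to $\mc S(\Pi)$ is a fresh Poisson process on $\mc S(\Pi)$, so its points supply the randomness needed to generate the $A$-valued marks; and the same property allows us to freely resample those points without disturbing the Poisson law of the first output coordinate $\psi(\Pi)_1$.

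I would first build a translation-equivariant rule that sends each point of $\Pi$ lying outside $\mc S(\Pi)$ to the nearest \emph{populated} globe (one containing at least one point of $\Pi$) to its right. Populated globes occur almost surely infinitely often in both directions on $\R$, since by Proposition \ref{key}(d) globes themselves do, and each globe is non-empty with probability $1-e^{-2r}>0$. For a populated globe $G$ with $n=n(G)\geq 1$ inside points and $m=m(G)\geq 0$ assigned outside points, I would use the Borel isomorphism between $(A,\alpha)$ and a Borel subset of $[0,1]$ with its image measure, combined with the binary expansion of a uniform random variable --- exactly the device used to construct the maps $f_k$ in Example \ref{trans-ex}, invoked there as Lemma \ref{borel} --- to produce a measurable bijection
\begin{equation*}
f_G : G^{n} \longrightarrow G^{n}\times A^{n+m}
\end{equation*}
that pushes the uniform law on $G^{n}$ forward to the product of the uniform law on $G^{n}$ with $\alpha^{n+m}$, the two coordinates being independent. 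I would then define $\psi(\mu)_1$ by leaving all outside points and all unpopulated globes untouched and, in each populated globe $G$, replacing the $n$ original points of $\mu$ inside $G$ by the first coordinate of $f_G$ applied to them; the sequence $\psi(\mu)_2 \in A^{\Z}$ is obtained by attaching the $n+m$ entries of the second coordinate of $f_G$ to the resampled points of $G$ and to the outside points assigned to $G$, and then listing the marks in the natural $\Z$-order of the points of $\psi(\mu)_1$.

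Properties (a)--(d) then follow from standard arguments: the Poisson law of $\psi(\Pi)_1$ and its independence from $\psi(\Pi)_2$ are consequences of the pushforward property of $f_G$ combined with Proposition \ref{key}(e); injectivity (c) comes from $f_G$ being a bijection on a set of full measure; and translation-equivariance (d) is inherited from $\mc S$ and the assignment rule. The main obstacle is the pair of finitariness claims. To determine $\psi(\mu)_1$ on $(-1,1)$ together with the marks at those points, one must locate every populated globe that either intersects $(-1,1)$ or receives an outside point from $(-1,1)$, and for each such globe $G$ compile the full list of outside points assigned to $G$ by scanning between $G$ and the next populated globe on each side. Each globe identification costs only a window of width $126$ by Proposition \ref{key}(b), and all the outward scans are local stopping-time operations whose windows are almost surely finite by the infinite-occurrence property of populated globes. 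Property (f) for $\psi^{-1}$ is handled analogously by inverting $f_G$ globe by globe.
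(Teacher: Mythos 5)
Your proposal does not actually address the statement it is meant to prove. Proposition \ref{key} asserts the \emph{existence} of the selection rule $\mc S$ with properties (a)--(e); what you have written is a construction of the marked-point isomorphism $\psi$ of Proposition \ref{inter}, and every step of it invokes Proposition \ref{key} as a black box --- you appeal to ``property (e) of Proposition \ref{key}'' for the resampling, to ``(b)'' for finitariness, and to ``(d)'' for the infinitude of globes. Read as a proof of Proposition \ref{key}, the argument is circular: the mapping $\mc S$ is never constructed, and none of its five properties is established. (For what it is worth, the paper does not prove this proposition either; it imports it from Holroyd, Lyons, and Soo \cite{splitting}. Your text is instead a reasonable sketch of the proof of Proposition \ref{inter}, which the paper proves separately.)

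If you intend to prove Proposition \ref{key} itself, the substance you are missing is the explicit definition of $\mc S(\mu)$ from the configuration $\mu$: one must specify a local, translation-equivariant pattern in the point configuration (in \cite{splitting} this is done by locating well-separated distinguished configurations of points and placing a length-two interval at a deterministic offset from each) and then verify the delicate compatibility between (b) and (e). The heart of the matter is that the event ``$[z-1,z+1]$ is a globe'' must be measurable with respect to $\mu$ restricted to the \emph{complement} of the globes --- otherwise replacing $\Pi|_{\ug{\Pi}}$ by an independent copy $\Gamma|_{\ug{\Pi}}$ in (e) could change which intervals are globes, and $\Lambda$ would not have the law of $\Pi$, nor would $\ug{\Lambda}=\ug{\Pi}$ hold. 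Designing the rule so that globe placement never looks inside a globe, while remaining finitary with the explicit $126$ window and the separation bound of (c), is the entire content of the proposition, and nothing in your proposal engages with it.
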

 \begin{remark}   The definition of $\mc S$ is explicit and elementary resulting in property \eqref{finitary-globes} which will imply that constructions involving $\mc S$ will have a finitary property.     Proposition \ref{key} also holds in higher dimensions where the intervals are replaced by  balls.    Similar ideas first  appeared in Ball \cite{MR2133893}, but the formulation is slightly different and  her construction uses stopping times and does not extend to higher dimensions.  \erk
 \end{remark}
 
  We refer to the map $\mc S$ as a \dff{selection rule}.    Let $\Pi$ be a Poisson point process on $\R$.   We call the globes that contain exactly one point of the point process \dff{special} and set $\usg{\Pi}$ to be the union of the special globes under $\Pi$.  	So that we may reference elements the special globes,  let $\{\gamma_i\}_{i\in\Z}$ be the collection of special globes,  where the special globes are ordered so that the right endpoint of $\gamma_i$ is less than the right endpoint of $\gamma_j$ if $i < j$, and we assume that $\gamma_0$ has the largest non-positive right endpoint.  Let $c_i$ be the center of the special globe $\gamma_i = [c_i-1, c_i +1]$ and $s_i$ the unique point of $\Pi$ in $\gamma_i$.  The following  application of Proposition \ref{key} will be used in the proof of Proposition \ref{inter}.

	\begin{corollary}
		\label{PointUSE}
	Let $\mc S$ be a selection rule.  	Let $\Pi$ be a Poisson point process on $\R$.  Let $\ns{U_i}_{i\in\bb N}$ be a sequence of independent  random variables that are uniformly distributed on $[-1,1]$ and independent of $\Pi$.  Then 
		\begin{equation*}
			\big(\Pi|_{\usg{\Pi}^c},\usg{\Pi}, \sum_{i \in \Z} \delta[s_i]  \big)  \eqd \Big ( \Pi|_{\usg{\Pi}^c},\usg{\Pi},\sum_{i \in \Z} \delta[U_i+c_i] \Big).
		\end{equation*}		
	\end{corollary}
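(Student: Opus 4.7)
The plan is to show that, conditional on the pair $(\Pi|_{\usg{\Pi}^c}, \usg{\Pi})$, the sequence $(s_i)_{i \in \Z}$ of special points is distributed as a collection of independent uniform random variables on the respective special globes $\gamma_i = [c_i-1,c_i+1]$. Since each $U_i+c_i$ is uniform on $\gamma_i$ and the collection $(U_i)_{i \in \Z}$ is independent of $\Pi$ and hence of $(\Pi|_{\usg{\Pi}^c}, \usg{\Pi})$, this conditional characterization will immediately give the desired equality in distribution of the two triples.

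The key step is a conditional Poisson property: conditional on $\Pi|_{\ug{\Pi}^c}$, the restriction $\Pi|_{\ug{\Pi}}$ is a Poisson process of intensity $r$ on the (now determined) set $\ug{\Pi}$. To justify this, fix any admissible globe set $G$ (a countable disjoint union of length-$2$ intervals). By property (b) of Proposition \ref{key}, the event $\{\ug{\Pi}=G\}$ belongs to $\sigma(\Pi|_{G^c})$: once the outside data is fixed, any extension inside $G$ produces the same globe structure. Combined with the defining independence of Poisson restrictions to disjoint Borel sets, conditioning on $\Pi|_{G^c}$ (and hence on the event $\{\ug{\Pi}=G\}$) leaves the distribution of $\Pi|_G$ unchanged as a Poisson process of intensity $r$ on $G$.

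Once this property is in hand, the rest follows from standard Poisson facts applied globe by globe. The restrictions of $\Pi$ to distinct globes $\gamma_i$ are independent Poisson processes on intervals of length $2$, and within a single globe, conditional on the total number of points $n$, the locations are i.i.d.\ uniform on the globe. Specializing to $n=1$ in each special globe gives that $s_i$ is uniform on $\gamma_i$, independently across special globes and independently of all data outside $\usg{\Pi}$. The richer conditioning on $(\Pi|_{\usg{\Pi}^c}, \usg{\Pi})$ just adds the per-globe point counts together with the configurations in the non-special globes, and by the independence of the restrictions across globes this does not disturb the uniform distributions in the special globes.

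I expect the main obstacle to be the rigorous formulation of the conditional Poisson property: $\ug{\Pi}$ is itself random, so one must justify the application of the $\sigma(\Pi|_{G^c})$-measurability of $\{\ug{\Pi}=G\}$ within a proper disintegration of the law of $\Pi$, presumably via a countable covering argument exploiting the explicitness of $\mc S$. Once that is settled, the conclusion reduces to the elementary statement that a Poisson process on a bounded interval, conditional on having exactly one point, places that point uniformly.
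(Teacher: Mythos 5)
Your proposal is correct in its conception and reaches the right conclusion, but you are re-deriving a tool that Proposition~\ref{key} already hands you. The paper itself gives no detailed argument here (it defers to Soo--Wilkens), but the intended route is precisely the one you identify: establish a ``conditional Poisson'' property for $\Pi$ restricted to the globes, then apply independence across globes and the fact that a Poisson process on an interval, conditioned to have one point, places it uniformly.

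The place where your argument works harder than it needs to is the step where you prove the conditional Poisson property from scratch using part~(\ref{finitary-globes}). As you yourself flag, $\ug{\Pi}$ is random and the event $\{\ug{\Pi}=G\}$ has probability zero for any fixed $G$, so ``conditioning on $\Pi|_{G^c}$'' must be packaged as a disintegration, which takes some care. But this is exactly what part~(e) of Proposition~\ref{key} is designed to bypass. Taking $\Gamma$ independent of $\Pi$ of the same intensity and setting $\Lambda=\Gamma|_{\ug{\Pi}}+\Pi|_{\ug{\Pi}^c}$, part~(e) gives $\Lambda\eqd\Pi$ and $\ug{\Lambda}=\ug{\Pi}$. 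Since $\Lambda|_{\ug{\Lambda}^c}=\Pi|_{\ug{\Pi}^c}$ and $\Lambda|_{\ug{\Lambda}}=\Gamma|_{\ug{\Pi}}$, this yields
\[
\bigl(\Pi|_{\ug{\Pi}^c},\,\ug{\Pi},\,\Gamma|_{\ug{\Pi}}\bigr)\eqd\bigl(\Pi|_{\ug{\Pi}^c},\,\ug{\Pi},\,\Pi|_{\ug{\Pi}}\bigr),
\]
which is the conditional Poisson statement in coupling form, with no disintegration of a null event required. From there your globe-by-globe argument goes through verbatim: within each globe the process is Poisson on an interval of length two independently across globes, the special/non-special classification and the non-special configurations are determined by the point counts inside globes (part of what is being conditioned on), and in each special globe the one remaining point is uniform. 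So the gap you anticipate is real for the route you took, but it evaporates if you use part~(e) rather than re-proving it from part~(b).
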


\begin{proof}
	The proof follows from Proposition \ref{key} and elementary properties of Poisson point processes; see \cite{Sw2} for details.  
	\end{proof}

Corollary \ref{PointUSE} tells us that the special points are uniformly distributed and in terms of distribution and  can be treated as is if they are generated using a  random source that is independent of $\Pi$, so that using the randomness from the special points is the same in distribution as using randomness from an independent source.

\subsubsection{Borel isomorphisms}  The following lemma which is a generalization of the mappings $f_k$ in Example \ref{trans-ex}  will allow us to generate from a single uniform random variable, a uniform random variable and a number of i.i.d.\ random variables.  

\begin{lemma}
	\label{borel}
	Let $(A, \alpha)$ be a standard probability space.  For each $k \in \Z^{+}$, there exists a measurable bijective  function $b_k:[0,1] \to [0,1] \times A^k$ such that the push-forward of Lebesgue measure under $b_k$ is the product of Lebesgue measure and $\alpha^k$. 
	\end{lemma}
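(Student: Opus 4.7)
The plan is to build $b_k$ in two steps: first decompose $[0,1]$ into a product $[0,1]^{k+1}$ using the binary-expansion trick already appearing in Example \ref{trans-ex}, then transport the last $k$ coordinates into $A$ via a measure-preserving Borel bijection of standard probability spaces.

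First I would construct a measurable bijection $\beta:[0,1]\to [0,1]^{k+1}$ whose push-forward of Lebesgue measure is the $(k+1)$-fold product Lebesgue measure. For $u\in[0,1]$, take the essentially unique binary expansion $u=0.b_1b_2b_3\cdots$, fix a partition of $\N$ into $k+1$ disjoint infinite sets $I_0,\ldots,I_k$, and let the $j$-th coordinate of $\beta(u)$ be the number whose binary expansion is given by the bits of $u$ indexed by $I_j$. By Fubini, together with the fact that reading off the bits at a fixed index set produces an independent sequence of fair coin-flips, the push-forward is exactly the product Lebesgue measure. Off the countable set of dyadic exceptions (where binary expansions fail to be unique), $\beta$ is an honest bijection; on the exceptional countable sets one fixes the bijection by matching up the two null sets explicitly.

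Second, since $(A,\alpha)$ is a standard probability space, the classical Borel isomorphism theorem produces a measurable bijection $g:[0,1]\to A$, modulo modification on a null set, such that $g$ pushes Lebesgue measure forward to $\alpha$. In the atomless case this is Rokhlin's isomorphism theorem; the atomic part of $\alpha$ is handled by first carving out from $[0,1]$ disjoint subintervals of Lebesgue measure equal to the respective atom masses and mapping each such subinterval bijectively onto the corresponding atom, then applying the atomless argument on the continuous part. Once $\beta$ and $g$ are in hand, I would define
$$b_k(u) = \bigl(\beta(u)_0,\; g(\beta(u)_1),\; \ldots,\; g(\beta(u)_k)\bigr),$$
which is manifestly measurable, bijective, and has the required push-forward.

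The main obstacle is purely bookkeeping around null sets: the binary-expansion map is not literally injective on dyadic rationals, and the Borel isomorphism $g$ is only canonically defined modulo a null set. Since in each case the exceptional sets on both sides are Borel of the same cardinality (indeed, countable in the dyadic case), one upgrades to a literal bijection by choosing any explicit Borel bijection between the null sets, a device that does not affect the push-forward computation and is sufficient for the lemma as stated.
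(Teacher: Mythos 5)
The second step of your construction breaks down exactly in the case the paper needs. You claim a measure-preserving \emph{bijection} $g:[0,1]\to A$ with $g_*\mathrm{Leb}=\alpha$, but such a map cannot exist when $\alpha$ has atoms: for a bijection $g$ the push-forward of Lebesgue measure assigns each singleton $\{a\}$ the measure $\mathrm{Leb}(\{g^{-1}(a)\})=0$, so $g_*\mathrm{Leb}$ is necessarily atomless. In the application in the paper, $A$ is finite or countable, hence $\alpha$ is purely atomic and any such $g$ must collapse an interval of positive Lebesgue measure onto a single point. That is a many-to-one map, not a bijection, and the failure of injectivity is on a set of positive measure, so it cannot be waved away as a null-set adjustment the way the dyadic exceptions in your first step can. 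Consequently your composite $b_k(u)=(\beta(u)_0,\,g(\beta(u)_1),\ldots,g(\beta(u)_k))$ is not injective in the atomic case: two distinct inputs $u\neq u'$ with $\beta(u)_0=\beta(u')_0$ and $g(\beta(u)_j)=g(\beta(u')_j)$ for all $j\ge 1$ map to the same output.

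The issue is that when you push a coordinate through $g$ you discard the information of ``where inside the atom'' the continuous coordinate sat, and that information is exactly what has to be retained to preserve bijectivity. The paper's proof handles this by going in the other direction: it expands $u\in[0,1)$ into an $A$-valued sequence using the $\alpha$-adic expansion (a direct generalization of your binary expansion, with digit $k$ read off by locating $u$ in the interval of length $\alpha_k$ and then renormalizing), keeps the first $k$ digits as the $A^k$ component, and folds the infinite tail of remaining digits back into $[0,1]$ via the inverse expansion. The tail encodes the residual continuous randomness, so nothing is lost and the map is a bijection off a null set. If you want to rescue your two-step plan, you would need to replace the map $g:[0,1]\to A$ by a bijection $[0,1]\to [0,1]\times A$ carrying Lebesgue to $\mathrm{Leb}\times\alpha$ (i.e., the $k=1$ case of the lemma itself), which shows the atomic case genuinely requires the expansion idea rather than a one-way Borel isomorphism onto $A$.
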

	Lemma \ref{borel}  is an immediate consequence of the Borel isomorphism theorem \cite[Theorem 3.4.23]{borel}. For the purposes of Theorem \ref{new},  we are only interested in the case where $A$ is finite or countable, so  more elementary proofs are available.

\begin{proof}[Proof of Lemma \ref{borel} (in the case that $A$ is countable)]
Assume $$A = \N= \ns{0,1,2, \ldots}$$ and that $\alpha_i >0$ for all $i \in \N$.   Set  $\ell_0 =0$ and let  $\ell_k = \sum_{i=0} ^{k-1} \alpha_i$   for $k >0$.  Consider the partition of $[0,1)$ into the intervals $\ns{ I_k=[\ell_k, \ell_{k+1}) }_{k=0} ^{\infty}.$    In the same spirit as  binary expansions, we will define a mapping $b:[0,1) \to \N^{\N}$.  Let $u \in [0,1)$.  If $u \in I_k$, then take $b(u)_0 = k$.  For the next coordinate, take the index  of the interval for which the renormalized point $ (u - \ell_k)/\alpha_k$ belongs.  By iterating this procedure, we obtain the mapping $b$.  

It is elementary to check that $b$ pushes forward Lebesgue measure to the product measure $\alpha^{\N}$ and is a bijection almost surely.  Hence  we obtain the required mapping  by transforming an element on the unit interval  to a sequence in $A$, saving $k$ terms of the sequence, then transforming the rest of the sequence back to an element on the unit interval:
\begin{equation*} 
	b_k(u) = \big[ b^{-1}   \big( b(u))_{i=k} ^{\infty}\big), b(u)_{i=0} ^{k-1}\big].
\qedhere
	\end{equation*}
	\end{proof}

\subsubsection{Extending Example \ref{trans-ex}}  We are  ready for the proof of Proposition \ref{inter}. 

\begin{proof}[Proof of Proposition \ref{inter}]
Let $(A, \alpha)$ be a standard probability space.    Let $\mc S$ be the selection rule from Proposition \ref{key}.  Let $\Pi$ be a Poisson point process on $\R$ with intensity $r>0$. 
       Let $c_i$ be the center of the special globe $\gamma_i$ and $s_i$ the unique point of $\Pi$ in $\gamma_i$.   Also let $$v_i = (1+s_i -c_i)/2 \in [0,1].$$
  The real line is partitioned into closed intervals of length two given by the special globes, and open intervals  $\gamma'_{i-1}$ of random length which lie between $\gamma_{i-1}$ and $\gamma_{i}$.   Note that by Proposition \ref{key} \eqref{distance}, $\gamma'_{i-1}$ has length at least $50$.     Let $n_i \geq 1$ be {\em one more} than the number of non-special points of $\Pi$ that are in the interval $\gamma'_{i-1}$.  Order the points of $\Pi$ in $\gamma'_{i-1}$ by their distance to $\gamma_i$ , so that the closest point receives the \dff{rank} $1$, and the next point receives the rank $2$, and the 
  furthest 
  point receives the rank $n_i-1$.      We note that 
  given $\ug{\Pi}$, 
   the numbers $n_k$ and the ranks depend only on $\Pi \ronn_{\usg{\Pi}^c}.$   Let $\ns{U_i}_{i\in\bb N}$ be a sequence of independent  random variables that are uniformly distributed on $[0,1]$  and independent of $\Pi$.  Let $b_k$ be the isomorphisms from Lemma \ref{borel}.    Write $b_k(u) = [ g_k(u), m_k(u)] \in [0,1] \times A^{n_k}$.     By Corollary \ref{PointUSE}, we have
\begin{equation}
	\label{applied}
	\big(\Pi|_{\usg{\Pi}^c},\usg{\Pi}, \ns{ b_{n_i}(v_i) }_{i \in \Z}   \big)   \eqd 
			  \big ( \Pi|_{\usg{\Pi}^c},\usg{\Pi}, \ns{ b_{n_i}(U_i) }_{i \in \Z} \big).
\end{equation}	

    From \eqref{applied}, and Corollary \ref{PointUSE}, we have
$$ \Pi':= \Pi - \sum_{i \in \Z} \delta[s_i] + \sum_{i \in \Z} \delta\big[ 2 g_{n_i}(v_i)-1 + c_i\big] \eqd \Pi;$$
furthermore, from the fact that the coordinates of $b_k$ are independent, we have   
\begin{equation}
	\label{marks}
 (\Pi', \ns{m_{n_i} (v_i)}_{i\in \Z} ) \eqd (\Pi, \ns{ m_{n_i}(U_i)}_{i \in \Z }).
 \end{equation}
Note that $\Pi$ agrees with $\Pi'$ everywhere except on the special globes; in addition, $\Pi'$ and $\Pi'$ have the same special globes, so that the numbers $n_k$ are the ranks of non-special points are the same for both processes.
 
We assign each point of $\Pi'$ a mark in the 
following
way.    If $p \in [\Pi]$, and for some $i \in \Z$, we have  $p \in \gamma_{i-1}'$ with rank $1\leq k < n_i$, then it receives the mark $[m_{n_i}(v_i)]_k$; otherwise for  some $j \in \Z$, we have $p \in \gamma_j$ is a special point, then it receives the mark $[m_{n_j}(v_j)]_{n_j}$.   If $[\Pi'] = \ns{p_i}_{i \in\Z}$, and $X = (X_i)_{i \in \Z}$ are the corresponding marks, then by \eqref{marks}, the law of $\psi(\Pi) := (\Pi', X)$ is $P_r \times \alpha^{\Z}.$ 

By definition, $\psi$ is translation-equivariant and injective $P_r$-almost surely.   It remains to argue that $\psi$ is finitary with a finitary inverse; this fact   follows from Proposition \ref{key}    \eqref{finitary-globes}.  To show that $\psi$ is finitary it  suffices to show that we can determine  $\Pi'$ and the marks the points receive  on $\gamma'_{-1} \cup \gamma_0 \cup \gamma'_0 \cup \gamma_1$ given $\Pi$ restricted to a large finite window.

 Recall that by definition, $\Pi$ agrees with $\Pi'$ except on the special globes; on a special  globe $\gamma_i$  the special point is  resampled and some of the randomness is distributed as marks to the new special point, and the points in the interval $\gamma'_{i-1}$.  Thus $\Pi'$ restricted to $\gamma'_{-1} \cup \gamma_0 \cup \gamma'_0 \cup \gamma_1$ depends only on  $\Pi$ restricted to the union of  $\gamma'_{-1}, \gamma_0, \gamma'_0, \gamma_1.$
  By Proposition \ref{key} \eqref{finitary-globes}, we can determine the intervals  $ \gamma'_{-1}, \gamma_0, \gamma'_0, \gamma_1$ and points of $\Pi$ on these intervals, given $\Pi$ restricted to an even larger interval  $[a-200, b+200]$, where $a$ is left endpoint of $\gamma_{-2}$ and $b$ is the right endpoint of $\gamma_1$.   Similarly, we have that   $\psi^{-1}$ is finitary.   
 \end{proof}
 
 \section{Concluding remarks}

A general measurable isomorphism of Bernoulli shifts  $\phi: A^{\Z} \to A^{\Z}$ may be difficult to understand and  not be effectively realizable by computers, since $\phi$ may require   one to examine a complete bi-infinite sequence $x \in A^{\Z}$ just to determine the output of one symbol $\phi(x)_0$.  
We hope to prove (without using the Ornstein existence criteria) finitary counterparts for some of the systems that have been proven to be Bernoulli and do for  other systems what Keane and Smorodinsky did for Bernoulli shifts and Markov chains.         In particular, 
  we hope to be able to drop the assumption in Theorem \ref{new} that the holding rates are uniform.

We remark that in our proof of Theorem \ref{new} the assumption of a uniform holding rate is used in  two crucial ways.  Firstly, the assumption implies  that the jump times of the Markov chain are given by a Poisson point process.  Secondly, the assumption gives that the skeleton is independent of the jump times.  These two ingredients are lost without the uniform holding rate assumption. 

We also note that the assumption that the skeleton is mixing is crucial in our application of Keane and Smorodinsky's finitary isomorphism to code the states into i.i.d.\ marks.  Regarding the possibility of a periodic skeleton, we ask the following simple question.

\begin{question}
	\label{colour}
	 Does there exists a finitary isomorphism taking a Poisson point process on $\R$ to a marked Poisson point process on $\R$ where the points are coloured red and blue in succession?
	\end{question}

The answer to Question \ref{colour}  is {\em yes} if the requirement that map be finitary is dropped using  Theorem \ref{old} applied to a Markov chain with two states and a uniform holding rate.    If the answer to Question \ref{colour} is {\em no}, then the mixing condition on the skeleton in Theorem \ref{new} cannot be simply removed.  

Note that Holroyd, Pemantle, Peres, and Schramm  prove that one cannot as a factor of a Poisson point process produce a colouring of the {\em same} Poisson point process, where the points are coloured red and blue in succession   \cite[Lemma 11]{random}.

  \subsection*{Acknowledgements}
  
  I thank  Zemer Kosloff and Amanda Wilkens for their  helpful conversations.    I also  thank the referee for carefully reviewing this article and providing insightful suggestions and comments.

   	\bibliographystyle{abbrv}
   	\bibliography{embedding}

\end{document}